
\documentclass[12pt]{amsart}
\usepackage[english]{babel}
\usepackage{latexsym}
\usepackage{amssymb}
\usepackage{amscd}
\usepackage[cp850]{inputenc}
\usepackage[mathscr]{eucal}

\newcommand\PB{{\mathrm{PB}}}

\tolerance=2000

\setlength{\oddsidemargin}{0.5cm} \setlength{\evensidemargin}{0.5cm}
\setlength{\textwidth}{15.5cm}

\newtheorem{teor}{Theorem}
\newtheorem{prop}{Proposition}



\newcommand{\N}{\mathbb{N}}

\bibliographystyle{plain}


\newcommand{\aproof}{\begin{proof}}
\newcommand{\zproof}{\end{proof}}

\begin{document}
\title{On isomorphically polyhedral $\mathcal L_\infty$-spaces}

\author{Jes\'{u}s M. F. Castillo}

\address{Departamento de Matem\'aticas\\ Universidad de Extremadura\\
Avenida de Elvas\\ 06071-Badajoz\\ Spain} \email{castillo@unex.es}

\author{Pier Luigi Papini}
\address{via Martucci 19, 40136 Bologna, Italia}
 \email{pierluigi.papini@unibo.it}

\thanks{This research was partially supported by project MTM2013-45643-C2-1-P. The research of the second author has been partially supported by GNAMPA of the Instituto Nazionale di Alta Matematica.}

\maketitle

\begin{abstract} We show that there exist $\mathcal L_\infty$-subspaces of separable isomorphically polyhedral Lindenstrauss spaces that cannot be renormed to be a Lindenstrauss space.\end{abstract}

\section{Isomorphically polyhedral spaces}

A Banach space is said to be polyhedral if the closed unit ball of every finite
dimensional subspace is the closed convex hull of a finite
number of points. Polyhedrality  is a geometrical notion: $c_0$ is
polyhedral while $c$ is not. It is also an hereditary notion: every subspace of a polyhedral space is polyhedral.
The isomorphic notion associated with
polyhedrality is: A Banach space is said to be \emph{isomorphically polyhedral} if it admits a polyhedral renorming. The simplest examples of isomorphically polyhedral spaces are the $C(\alpha)$ spaces for $\alpha$ an ordinal, and their subspaces. In \cite{castpapext} we surveyed what is known about polyhedral $\mathcal L_\infty$-spaces, which can be summarized as follows:

\begin{enumerate}
\item There are polyhedral spaces which are not $\mathcal L_\infty$: indeed, any non $\mathcal L_\infty$ subspace of $c_0(\Gamma)$ --- recall from \cite{gkl} that subspaces of $c_0(\Gamma)$ are $\mathcal L_\infty$-spaces if and only if they are isomorphic to $c_0(I)$.

\item There are Lindenstrauss spaces not polyhedral: $C[0,1]$.

\item A result of Fonf \cite{fonfpre} asserts that preduals of $\ell_1$
are isomorphically polyhedral.

\item Fonf informed us \cite{fonfpers} that the result fails for $\ell_1(\Gamma)$: Kunen's compact
$\mathscr K$ provides, under {\sf CH}, a scattered, non metrizable, compact so that $C(\mathscr K)$ space has the rare property that every uncountable set of elements contains one that belongs to the closure of the convex hull of the others. And this property was used by Jim\'enez and Moreno \cite{jimore} to show that every equivalent renorming of  $C(\mathscr K)$ has only a countable number of weak*-strongly exposed points. Thus, no equivalent renorming can be polyhedral (see \cite{fpst}). At the same time $C(\mathscr K)^* = \ell_1(\Gamma)$ since $\mathscr K$ is scattered.
\item The trees $T$ for which $C(T)$ is
isomorphically polyhedral are characterized in \cite{fpst}. Thus, there are scattered
compact $K$ (not depending on {\sf CH} as it occurs with Kunen's compact) such that $C(K)$ is not isomorphically polyhedral.
\end{enumerate}

Fonf \cite{fonfpers} asked \cite[Section 4, problem 5]{castpapext} whether isomorphically polyhedral $\mathcal L_\infty$-spaces are isomorphically Lindenstrauss. The purpose of this note is to show that the answer is no.

\section{Preliminaries}

A Banach space $X$ is said to be an $\mathcal{L}_{\infty, \lambda}$-space if every finite dimensional
subspace $F$ of $X$ is contained in another finite dimensional subspace
of $X$ whose Banach-Mazur distance to the  corresponding space $\ell^n_\infty$
is at most $\lambda$. The space $X$ is said to be an $\mathcal L_\infty$-space if it is an $\mathcal L_{\infty, \lambda}$-space for some $\lambda$. The basic theory and examples of $\mathcal{L}_{\infty}$-spaces can be found
in \cite[Chapter 5]{lindtzaf-LN}. A Banach space $X$ is said to be a Lindenstrauss space if it is an isometric predual of some space $L_1(\mu)$. Lindenstrauss spaces correspond to $\mathcal L_{\infty,1^+}$-spaces. A Lindenstrauss space is an $\mathcal L_{\infty,1}$-space if and only if it is polyhedral (i.e., the unit ball of every finite dimensional subspace is a polytope) \cite[p.199]{lindtzaf-LN}.\\

A Banach space $X$ is said to have \emph{Pe\l czy\'nski's property $(V)$} if each
operator defined on $X$ is either weakly compact or an
isomorphism on a subspace isomorphic to $c_0$. Pe\l czy\'nski shows in \cite{pelcy} that $C(K)$-spaces enjoy property $(V)$, and Johnson and Zippin \cite{johnzippre} that Lindenstrauss spaces also have $(V)$.\\

Let $\alpha:A\to Z$ and $\beta:B\to Z$ be operators acting between Banach spaces.
 the pull-back space $\PB$ is defined as $\PB=\PB(\alpha,\beta)=\{(a,b)\in A\oplus_\infty B: \alpha(a)=\beta(b) \}$. It has the property of yielding a commutative diagram
\begin{equation}\label{pb-dia}
\begin{CD}
\PB@>{'}\!\beta>> A\\
@V {'}\!\alpha VV @VV \alpha V\\
B @> \beta >> Z
\end{CD}
\end{equation}
in which the arrows after primes are the restriction of the projections onto the corresponding factor.
Needless to say (\ref{pb-dia}) is minimally commutative in the sense that if the operators
${''}\!\beta: C\to A$ and ${''}\!\alpha: C\to B$ satisfy $\alpha\circ {''}\! \beta=\beta\circ {''}\!\alpha$,
then there is a unique operator $\gamma:C\to\PB$ such that
${''}\!\beta={'}\!\beta \gamma$ and ${''}\!\beta={'}\!\beta \gamma$.
Clearly, $\gamma(c)=({''}\!\beta(c), {''}\!\alpha(c))$ and  $\|\gamma\|\leq \max \{\|{''}\!\alpha\|, \|{''}\!\beta\|\}$. Quite clearly ${'}\!\alpha$ is onto if $\alpha$ is. As a consequence of this, if one has an exact sequence
\begin{equation}\label{SEX}\begin{CD}
0 @>>> Y @>\imath>> X @>\pi>> Z @>>> 0
\end{CD}
\end{equation}
and an operator $u:A\to Z$ then one can form the pull-back diagram of the couple $(\pi, u)$:
\begin{equation*}
\begin{CD}
0  @>>>  Y @>\imath>> X @>\pi>> Z @>>>0 \\
  &  & & &  @A {'}\!u AA  @AA u A
    \\
&& & & \PB  @> {'}\!\pi >> A
\end{CD}
\end{equation*}
Recalling that ${'}\!\pi$ is onto and taking $j(y)=(0,\imath(y))$, it is easily seen that the following
diagram is commutative:
\begin{equation}\label{pb-seq}
\begin{CD}
0  @>>>  Y @>\imath>> X @>\pi>> Z @>>>0 \\
  &  & @|  @A{'}\!u AA  @AA u A
    \\
0@>>> Y@>j >> \PB  @>{'}\!\pi >> A@>>>0\\
\end{CD}
\end{equation}
Thus, the lower sequence is  exact, and we shall refer to it as the pull-back sequence.
The well-known (see e.g., \cite{castgonz}) splitting criterion is: the pull-back sequence splits if and only if $u$ lifts to $X$; i.e., there is an operator $U:A\to X$ such that $\pi U=u$.\\

\section{An isomorphically polyhedral $\mathcal L_\infty$-space that is not Lindenstrauss}

\begin{teor} There is a separable isomorphically polyhedral $\mathcal L_\infty$ space that is not isomorphically Lindenstrauss. Moreover, it is a subspace of an isomorphically polyhedral Lindenstrauss space.
\end{teor}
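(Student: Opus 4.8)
The plan is to construct the example as a twisted sum. I would start from the known exact sequence of $\mathcal L_\infty$-spaces and build a subspace of a Lindenstrauss space that inherits isomorphic polyhedrality from the ambient space but cannot itself be a Lindenstrauss space. The key is that isomorphic polyhedrality is \emph{hereditary in the isometric sense}: a subspace of a polyhedral space is polyhedral, so any subspace of an isomorphically polyhedral Lindenstrauss space is automatically isomorphically polyhedral. Thus the whole difficulty is concentrated in producing an $\mathcal L_\infty$ subspace that fails to be isomorphically Lindenstrauss, while living inside such an ambient space.

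The natural mechanism is a nontrivial twisted sum
\begin{equation*}
\begin{CD}
0 @>>> c_0 @>>> X @>>> Z @>>> 0
\end{CD}
\end{equation*}
where $Z$ is chosen isomorphically polyhedral (for instance a suitable $c_0(I)$ or a $C(K)$ for $K$ a tree from \cite{fpst}) so that the middle space $X$ is an $\mathcal L_\infty$-space but not isomorphically Lindenstrauss. First I would exhibit such a sequence whose nontriviality is detected by the failure of a splitting criterion: using the pull-back formalism of \eqref{pb-seq}, one shows the sequence does not split because the quotient map does not lift. Second, I would argue that $X$ cannot be renormed to be Lindenstrauss; here is where property $(V)$ enters. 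Because Lindenstrauss spaces have Pe\l czy\'nski's property $(V)$, if $X$ were isomorphically Lindenstrauss it would have $(V)$, and I would derive a contradiction by exhibiting on $X$ an operator that is neither weakly compact nor an isomorphism on a copy of $c_0$ — most naturally the quotient map onto $Z$ or a related map arising from the twisted-sum structure.

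Third, and crucially, I would embed this $X$ into an isomorphically polyhedral Lindenstrauss space. The idea is to realize the whole extension inside an ambient $C(K)$ (or $c_0$-like) space that is itself isomorphically polyhedral, so that $X$ appears as a subspace. Concretely, I would take the push-out/pull-back constructions so that the sequence sits inside the canonical polyhedral Lindenstrauss space, using that $c_0$ and the chosen $Z$ both embed polyhedrally; the twisted sum $X$ then embeds into a polyhedral $C(K)$. Once $X$ is a subspace of an isomorphically polyhedral Lindenstrauss space, its own isomorphic polyhedrality is free by heredity.

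The main obstacle I expect is the second step: guaranteeing that $X$ is \emph{not} isomorphically Lindenstrauss. Isomorphic non-triviality of the extension is not by itself enough, since a twisted sum of Lindenstrauss spaces can still be renormed Lindenstrauss; one needs a genuine obstruction that is isomorphic in nature. Property $(V)$ is the right tool, but the delicate point is to ensure the extension is nontrivial in the strong sense that no renorming repairs it — i.e. that the relevant operator witnessing failure of $(V)$ persists under any equivalent norm, which it does since $(V)$ is an isomorphic invariant. Thus the real work is to choose $Z$ and the cocycle defining $X$ so that a concrete operator on $X$ violates property $(V)$, and to verify simultaneously that $X$ embeds into a polyhedral Lindenstrauss space.
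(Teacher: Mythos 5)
Your overall strategy --- a twisted sum whose failure of Pe\l czy\'nski's property $(V)$ obstructs any Lindenstrauss renorming, realized as a subspace of an isomorphically polyhedral Lindenstrauss space so that its own polyhedrality comes for free by heredity --- is exactly the architecture of the paper's proof, and your two structural observations (that property $(V)$ is an isomorphic invariant, and that a subspace of an isomorphically polyhedral space is isomorphically polyhedral) are both correct and both used. But the proposal stops precisely where the proof begins. The step you defer to the end (``choose $Z$ and the cocycle so that a concrete operator on $X$ violates property $(V)$, and verify simultaneously that $X$ embeds into a polyhedral Lindenstrauss space'') is the entire content of the theorem, and there is a genuine tension hidden in it: extensions that visibly sit inside a direct sum of nice spaces tend to be exactly the ones that split, while the known extensions $0\to C(\omega^\omega)\to\Omega\to c_0\to 0$ with strictly singular quotient from \cite{ccky} are produced abstractly and carry no polyhedral structure. (Note also that in the relevant sequences $c_0$ is the \emph{quotient}, not the kernel as in your display; the strictly singular quotient map onto $c_0$ is the operator that is neither weakly compact nor an isomorphism on a copy of $c_0$.)

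The paper resolves this tension by a quantitative amalgamation that your sketch does not contain. One takes the operators $T_N:c_0\to\ell_\infty(\omega^N)$ of \cite{ccky} witnessing $\lim_N\rho_N(c_0)=+\infty$ and forms the twisted sums $C(\omega^N)\oplus_{T_N}c_0$; each of these sequences \emph{splits}, so each middle space is isomorphic to $C(\omega^N)\oplus_\infty c_0$ and hence isomorphically polyhedral, but the projection constants tend to infinity. One then replaces each norm by a $2$-equivalent polyhedral norm using \cite{dfh}, takes the $c_0$-sum (polyhedral by \cite{hn}), and finally pulls back along the diagonal operator $\Delta(x)=(\rho_N(c_0)^{-1/2}x)_N$. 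The exponent $-1/2$ is what converts the divergence of $\rho_N(c_0)$ into strict singularity of the quotient map of the pulled-back sequence, and the pull-back description exhibits $\Omega$ as a subspace of $c_0(C(\omega^N)\oplus_{P_N}c_0)\oplus_\infty c_0$, which is exactly the isomorphically polyhedral Lindenstrauss space your plan requires. Without some such device --- uniform estimates on how badly the finite-stage sequences split, plus a diagonal pull-back to convert non-uniformity into strict singularity --- your outline cannot be completed as stated.
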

\begin{proof} We need to recall from \cite{ccky} the existence of nontrivial exact sequences
$$
\begin{CD}
 0 @>>> C(\omega^\omega)  @>>> \Omega @>q>> c_0 @>>> 0
\end{CD}
$$in which the quotient map $q$ is strictly singular. This fact makes $\Omega$ fail Pe\l czy\'nski's property $(V)$. Since Lindenstrauss spaces share with $C(K)$-spaces Pe\l czy\'nski's property (V), the space $\Omega$ is not isomorphic to a Lindenstrauss space. Of course it is an $\mathcal L_\infty$-space since this is a  $3$-space property. Thus, our purpose is to show that there is an $\Omega$  as above that is isomorphically polyhedral.

We  recall from \cite[Section3]{ccky} the parameter $\rho_N(c_0)$, defined as the the least constant such that if $T: c_0\to \ell_\infty(\omega^N)$ is a bounded linear operator such that $\emph{dist}(Tx, C(\omega^N))\leq \|x\|$ for all $x\in c_0$ then  there is a linear map $L: c_0\to C(\omega^N)$ with $\|T - L\|\leq \rho_N(c_0)$. Theorems 3.1 and Lemma 3.2 in \cite{ccky} show that $\lim \rho_N(c_0)=+\infty$. Now we need a specific choice for each $N$: this is provided by \cite[Prop. 4.6]{ccky}: there is a bounded operator $T_N: c_0\to \ell_\infty(\omega^N)$ so that $\emph{dist}(T_Nx, C(\omega^N))\leq \|x\|$ for all $x\in c_0$ but such that if $E \subset c_0$ is a subspace of $c_0$ almost isometric to $c_0$ then $\rho_N(c_0)\leq 2\|T_N -L \|$ for any linear map $L: c_0\to C(\omega^N)$.\\

Let, for each $N$, a linear continuous operator $T_N: c_0\to \ell_\infty(\omega^N)$ as above. We form the twisted sum space $$C(\omega^N)\oplus_{T_N} c_0 = \left( C(\omega^N) \times c_0, \|\cdot\|_{T_N}\right)$$ endowed with the norm $\|(h,x)\|_{T_N}= \max \{\|h -T_Nx\|, \|x\|\}$. This yields an exact sequence
$$\begin{CD}\label{sequ}
0 @>>> C(\omega^N)@>i_N>> C(\omega^N)\oplus_{T_N} c_0  @>q_N>> c_0@>>>0\end{CD}$$
with embedding $i_N(f)=(f,0)$ and quotient map $q_N(f,x)=x$. The identity map
$id: C(\omega^N)\oplus_{T_N} c_0 \longrightarrow C(\omega^N)\oplus_\infty c_0$ is an isomorphism since
$$\|T_N\|^{-1}\|(f, x)\|_{T_N}\leq \|(f,x)\|_\infty \leq \|T_N\|\|(f, x)\|_{T_N}$$
and therefore the space $C(\omega^N)\oplus_{T_N} c_0$ is isomorphically polyhedral. We need now to use the main result in \cite{dfh} asserting that in a separable isomorphically polyhedral space every norm can be approximated by a polyhedral norm.
Let $\|\cdot\|_{P_N}$ be a polyhedral norm in $C(\omega^N)\oplus_{T_N} c_0$ that is $2$-equivalent to $\|\cdot\|_{T_N}$.

The sequence (\ref{sequ}) splits, but the norm of the projection goes to infinity with $N$: Indeed, if $$P: C(\omega^N)\oplus_{T_N} c_0\to C(\omega^N)$$ is a linear continuous projection then $P$ has to
have the form $P(f,x)= (f - Lx, 0)$, where $L: c_0\to C(\omega^N)$ is a certain linear map. Thus, if $x\in c_0$ is a norm one element,
one gets  $P(T_Nx, x)= (T_Nx - Lx, 0)$ and thus $\|T_Nx - Lx\|\leq \|P\|\|x\|$, hence $\|T_N - L\|\leq \|P\|$. The choice of $T_N$ forces
$\lim_{N\to \infty} \inf \|P\|=+\infty$. Therefore, the $c_0$-sum
$$
\begin{CD}
 0 @>>> c_0(C(\omega^N))  @>>> c_0(C(\omega^N)\oplus_{P_N} c_0) @>(q_N)>> c_0(c_0) @>>> 0
\end{CD}
$$cannot split. The space $ c_0(C(\omega^N)\oplus_{P_N} c_0) $ is isomorphically
polyhedral as any $c_0$-sum of polyhedral spaces \cite{hn}. We now define a suitable operator $\Delta$ so that when making the pull-back diagram
$$
\begin{CD}
 0 @>>> c_0(C(\omega^N))  @>>> c_0(C(\omega^N)\oplus_{P_N} c_0) @>(q_N)>> c_0(c_0) @>>> 0\\
 &&@| @AA{\delta}A @AA{\Delta}A\\
 0 @>>> c_0(C(\omega^N))  @>>> \Omega @>>q> c_0 @>>> 0\\
\end{CD}
$$the map $q$ is strictly singular. That prevents $\Omega$ from being Lindenstrauss under any equivalent renorming.

Pick as $\Delta$ the diagonal operator $c_0\to c_0(c_0)$ induced by the scalar sequence $(\rho_N(c_0)^{-1/2})\in c_0$; i.e.,$$\Delta(x) = (\rho_N(c_0)^{-1/2}x)_N.$$

Assume that $q$ is not strictly singular. Then, there is a subspace $E$ of $c_0$ and a linear bounded map $V:E\to \Omega$ so that $qV=\Delta_{|E}$. By the $c_0$ saturation and the distortion properties of $c_0$, there is no loss of generality assuming that $E$ is an almost isometric copy of $c_0$. By the commutativity of the diagram $(q_N)\delta V =\Delta_{|E}$, which in particular means that $q_N\delta V (e) = \rho_N(c_0)^{-1/2}e$ for all $e\in E$. This means that the map $\delta V$ has on $E$ the form $(L_Ne, \rho_N(c_0)^{-1/2}e)_N$ where $L_N: E\to C(\omega^N)$ is a linear map; by continuity, there is a constant $M$ so that $\|(L_Ne, \rho_N(c_0)^{-1/2}e)\| \leq M\|e\|$, which means
$$\|L_Ne - T_N \rho_N(c_0)^{-1/2}e\| \leq M\|e\|$$
and thus
$$\|\rho_N(c_0)^{1/2} L_N - T_N \| \leq M\rho_N(c_0)^{1/2}.$$

This contradicts the fact that $E=c_0$, the definition of $\rho_N(c_0)$ and the choice of $T_N$.\\

To conclude the proof, the definition of pull-back space implies that $\Omega$ is actually a subspace of $c_0(C(\omega^N)\oplus_{P_N} c_0) \oplus_\infty c_0$, hence isomorphically polyhedral.\end{proof}

Since $c_0(C(\omega^N)) \simeq C(\omega^\N)$, the space $\Omega$ above yields a twisted sum
$$
\begin{CD}
 0 @>>> C(\omega^\omega)  @>>> \Omega @>q>> c_0 @>>> 0
\end{CD}
$$
in which $q$ is strictly singular. The dual sequence
$$
\begin{CD}
 0 @>>> \ell_1  @>>> \Omega^* @>>> \ell_1 @>>> 0
\end{CD}
$$necessarily splits and thus $\Omega^*$ can be renormed to be $\ell_1$, although $\Omega$ cannot be endowed with an equivalent norm $|\cdot|$ so that $(\Omega, |\cdot|)^*=\ell_1$. Moreover, $\Omega$ is actually a subspace of the isomorphically polyhedral Lindenstrauss space  $c_0(C(\omega^N)\oplus_{P_N} c_0) \oplus c_0$.

\section{An isomorphically polyhedral $\mathcal L_\infty$ space that is not a Lindenstrauss-Pe\l czy\'nski space}

We show now that one can produce an $\mathcal L_\infty$-variation of $\Omega$ still farther from Lindenstrauss spaces.
Lazar \cite{laza} and Lindenstrauss \cite{lindmemo} showed that Lindenstrauss polyhedral spaces $X$ enjoy the property that
compact $X$-valued operator admit equal norm extensions. In \cite{castmorestud}, the authors introduce the Lindenstrauss-Pe\l czy\'nski spaces (in short $\mathscr {LP}$-spaces) as those Banach spaces $E$ such that all operators
from subspaces of $c_0$ into $E$ can be extended to $c_0$. The
spaces are so named because Lindenstrauss and Pe\l czy\'nski first
proved in \cite{lindpelc} that $C(K)$-spaces have this property.
Lindenstrauss spaces have also the property (see
\cite{lindpelc,castsuaisr}) as well as $\mathcal L_\infty$-spaces
not containing $c_0$ \cite{castmorestud} and, of course, all their
complemented subspaces. The construction of the space $\Omega$ above has been modified in \cite{castmoresua} to show that for every subspace $H\subset c_0$ there is an exact sequence
$$
\begin{CD}
 0 @>>> C(\omega^\omega)  @>>> \Omega_H @>>> c_0 @>>> 0
\end{CD}
$$
in which the space $\Omega_H$ is not a Lindenstrauss-Pe\l czy\'nski space \cite{lindpelc}; more precisely, there is an operator $H\to \Omega_H$ that cannot be extended to the whole $c_0$.
\begin{prop} There is an isomorphically polyhedral $\mathcal L_\infty$-space that is not an $\mathscr{LP}$-space.
\end{prop}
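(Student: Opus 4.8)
The plan is to run the construction of the preceding theorem, but to feed it the operator that \cite{castmoresua} uses to destroy the $\mathscr{LP}$ property. First I would recall that the space $\Omega_H$ of \cite{castmoresua} is obtained by modifying exactly the construction above: it is a pull-back of a non-splitting $c_0$-sum sequence
$$
\begin{CD}
0 @>>> c_0(C(\omega^N)) @>>> c_0(C(\omega^N)\oplus_{T_N} c_0) @>(q_N)>> c_0(c_0) @>>> 0
\end{CD}
$$
along a suitable operator into $c_0(c_0)$, with the additional feature that a prescribed operator $H\to\Omega_H$ admits no extension to $c_0$. The point that matters here is that the middle term is assembled from the twisted sums $C(\omega^N)\oplus_{T_N} c_0$, each isomorphic to $C(\omega^N)\oplus_\infty c_0$ and hence isomorphically polyhedral.

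Second, I would polyhedralize verbatim. Using \cite{dfh} I replace each $\|\cdot\|_{T_N}$ by a polyhedral norm $\|\cdot\|_{P_N}$ that is $2$-equivalent to it, and form the $c_0$-sum $c_0(C(\omega^N)\oplus_{P_N} c_0)$, which is isomorphically polyhedral by \cite{hn}. Because the equivalences are uniform in $N$, the formal identity
$$
c_0(C(\omega^N)\oplus_{T_N} c_0)\longrightarrow c_0(C(\omega^N)\oplus_{P_N} c_0)
$$
is an isomorphism commuting with the embeddings and with the quotient maps $(q_N)$, so it induces an isomorphism of the associated pull-backs. Hence the pull-back formed with the polyhedral middle space is isomorphic to $\Omega_H$; and by the description of pull-back spaces it embeds into $c_0(C(\omega^N)\oplus_{P_N} c_0)\oplus_\infty c_0$, which is isomorphically polyhedral. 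Therefore this realization of $\Omega_H$ is isomorphically polyhedral.

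Finally, I would transfer the failure of the $\mathscr{LP}$ property through the isomorphism. Let $\varphi$ denote the isomorphism from the original $\Omega_H$ onto its polyhedral realization, and let $u\colon H\to\Omega_H$ be the operator of \cite{castmoresua} that does not extend to $c_0$. Then $\varphi u\colon H\to\varphi(\Omega_H)$ cannot extend either, since an extension $w\colon c_0\to\varphi(\Omega_H)$ would make $\varphi^{-1}w$ an extension of $u$. Thus the polyhedral realization carries a non-extendable operator from $H\subset c_0$ and so is not an $\mathscr{LP}$-space.

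The main obstacle is to confirm that the modification performed in \cite{castmoresua} keeps the same local blocks $C(\omega^N)\oplus_{T_N} c_0$ in the middle of the $c_0$-sum sequence, so that the polyhedralization applies without change; if the pull-back operator distorts these blocks, one must verify that the distorted blocks remain isomorphic to $C(\omega^N)\oplus_\infty c_0$ and hence isomorphically polyhedral. Once this compatibility is secured, no new quantitative estimate on $\rho_N(c_0)$ is required: the non-$\mathscr{LP}$ conclusion is inherited from \cite{castmoresua} purely through the isomorphism.
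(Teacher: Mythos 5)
Your proposal is correct and follows essentially the same route as the paper: polyhedrality is obtained by realizing $\Omega_H$ inside a sum of isomorphically polyhedral spaces via \cite{dfh} and \cite{hn}, and the failure of the $\mathscr{LP}$ property is carried over through an isomorphism. The only difference is the order of operations: the paper performs the two pull-backs of \cite{castmoresua} (along an embedding $u_H:c_0/H\to c_0$ and the quotient $t:c_0\to c_0/H$) directly on top of the already-polyhedral $\Omega$ of the preceding theorem, so that $\Omega_H$ sits inside $\Omega\oplus c_0$ and the obstacle you flag about the internal blocks of the $c_0$-sum never arises --- those blocks are untouched because the modification happens entirely downstream of $\Omega$.
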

\begin{proof}
Consider the exact sequence $0 \to C(\omega^\omega) \to \Omega \to c_0 \to
0$ with strictly singular quotient constructed above. Since every quotient of $c_0$ is isomorphic to a
subspace of $c_0$, we can consider that there is an embedding
$u_H: c_0/H  \to c_0$. The pull-back sequence $0 \to
C(\omega^\omega) \to P_H \stackrel{p}\to c_0/H \to 0 $
also has strictly singular quotient map. We form the commutative
diagram

$$
\begin{CD}
&&&& 0 &=& 0\\
 &&&& @AAA @AAA\\
0@>>> C(\omega^\omega)@>>> P_H@>p>>c_0/H@>>>0\\
 &&\Vert&& @AAA @AAtA\\
 0@>>> C(\omega^\omega) @>>> \Omega_H @>>Q> c_0@>>> 0\\
 &&&& @AjAA @AAiA\\
&&&& H &=& H\\
 &&&& @AAA @AAA\\
&&&& 0 && 0 \end{CD}$$

to show, exactly as in \cite{lindpelc} that $\Omega_H$ is not an $\mathscr{LP}$-space since $j$ cannot be extended to $c_0$ through $i$. The space $\Omega_H$ has been obtained from a pull-back diagram
$$
\begin{CD}
 0 @>>> c_0(C(\omega^N))  @>>> \Omega @>>> c_0 @>>> 0\\
&&@|@AAA @AA{u_H}A\\
0@>>> C(\omega^\omega)@>>> P_H@>p>>c_0/H@>>>0\\
 &&\Vert&& @AAA @AAtA\\
 0@>>> C(\omega^\omega) @>>> \Omega_H @>>Q> c_0@>>> 0,
 \end{CD}
$$and thus it is a subspace of $\Omega\oplus c_0$, hence isomorphically polyhedral.\end{proof}

\end{document}